\documentclass[11pt,a4paper]{amsart}

\usepackage[T1]{fontenc}
\usepackage[utf8]{inputenc}
\usepackage{lmodern}
\usepackage{amsmath,amssymb,amsthm,mathtools}
\usepackage[a4paper,textwidth=160mm,textheight=240mm,centering]{geometry}
\usepackage{microtype}
\usepackage[hidelinks]{hyperref}
\usepackage{enumitem}
\numberwithin{equation}{section}
\newtheorem{theorem}{Theorem}[section]
\newtheorem{proposition}[theorem]{Proposition}
\newtheorem{lemma}[theorem]{Lemma}
\newtheorem{corollary}[theorem]{Corollary}
\theoremstyle{remark}
\newtheorem{remark}[theorem]{Remark}

\newcommand{\C}{\mathbb C}
\newcommand{\R}{\mathbb R}
\newcommand{\Z}{\mathbb Z}
\newcommand{\F}{\mathbb F}
\newcommand{\E}{\mathbb E}
\newcommand{\Prob}{\mathbb P}
\newcommand{\calS}{\mathcal S}
\newcommand{\ee}[1]{e\left(#1\right)}
\newcommand{\norm}[1]{\lVert#1\rVert_\infty}
\DeclareMathOperator{\Rea}{Re}
\renewcommand{\leq}{\leqslant}
\renewcommand{\geq}{\geqslant}
\setlist[enumerate]{label=\textup{(\roman*)},leftmargin=*,itemsep=3pt}

\title[Space-filling curves and small-ball probabilities]
{Space-filling curves and small-ball probabilities for the Kloosterman process}

\author{Danyil Kyselov}
\address{Odessa I.~I. Mechnikov National University,
2 Dvoryanskaya Street, 65086 Odessa, Ukraine}
\email{daniil.kiselev@stud.onu.edu.ua}

\date{October 15, 2026}
\subjclass[2020]{Primary 11T23; Secondary 42A16, 60G17}
\keywords{Kloosterman paths, Rudin--Shapiro sequence, space-filling curves,
random Fourier series, small-ball probabilities}
\hypersetup{pdftitle={Space-filling curves and small-ball probabilities for the Kloosterman process},
pdfauthor={Danyil Kyselov},
pdfsubject={Explicit curves and quantitative approximation in the support of a random Fourier series}}

\begin{document}

\begin{abstract}
We construct explicitly a space-filling curve in the support of the limiting
Kloosterman process, using the planar partial-sum curve of the
Rudin--Shapiro sequence. The qualitative existence statement was previously
attributed to J.~Bober by Kowalski.
The resulting curve is a closed, $1/2$-H\"older
continuous loop whose image contains a specified triangle of area $1/128$.
Its Fourier coefficients are purely imaginary and satisfy
$|\widehat f(h)|\leq(2\pi|h|)^{-1}$ for $h\ne0$.
We also prove a uniform lower bound for small balls centred at closed
$1/2$-H\"older curves with a fixed coefficient bound, of the form
$\exp\{-C\varepsilon^{-2}\log^2(2/\varepsilon)\}$.
Known functional convergence then gives a quantitative lower bound for the
limiting proportion of two-parameter Kloosterman paths approximating the
constructed loop.
\end{abstract}

\maketitle

\section{Introduction}

Write $e(u)=\exp(2\pi iu)$. Let $(X_h)_{h\in\Z}$ be independent random
variables with the Sato--Tate distribution
\[
 d\mu_{\mathrm{ST}}(x)=\frac{\sqrt{4-x^2}}{2\pi}
 \boldsymbol 1_{[-2,2]}(x)\,dx.
\]
Kowalski and Sawin \cite{KSpaths} introduced the continuous random process
\begin{equation}\label{eq:process}
 K(t)=tX_0+\sum_{h\ne0}\frac{e(ht)-1}{2\pi ih}X_h,
 \qquad 0\leq t\leq1,
\end{equation}
where the symmetric partial sums converge almost surely uniformly.
Let $\calS$ denote the support of its law in $C([0,1],\C)$, equipped with
the uniform norm. By \cite[Theorem~1.1]{KSsupport}, a continuous function
$g$ belongs to $\calS$ if and only if $g(0)=0$, $g(1)\in[-2,2]$, and
$u(t)=g(t)-tg(1)$ satisfies
\begin{equation}\label{eq:support}
 \widehat u(h)\in i\R,\qquad
 |\widehat u(h)|\leq\frac1{\pi|h|}\quad(h\in\Z\setminus\{0\}),
 \qquad
 \widehat u(h)=\int_0^1u(t)e(-ht)\,dt.
\end{equation}

Kowalski and Sawin \cite[Introduction, question~(1)]{KSsupport}
asked whether $\calS$ contains a space-filling curve. A positive answer
was subsequently attributed to J.~Bober by Kowalski
\cite[Section~6.3, p.~110 of the author version]{KowalskiBook}.
More specifically, Kowalski's January~2020 lecture slides
\cite[slide~49]{KowalskiSlides} state that the support contains a curve
whose image contains a square, and refer to the Heilbronn Conference~2019.
The published bibliography of \cite{KowalskiBook} lists a related
three-author preprint of Bober, Kowalski and Sawin \cite{BKS}.
We have not located a publicly available text of that preprint and do not
claim priority for the qualitative existence statement.

This paper gives a direct construction answering the space-filling support
question, together with explicit geometric and Fourier bounds and a
uniform small-ball lower bound. The construction produces a closed
$1/2$-H\"older curve whose image contains a specified triangle. Its
normalized Fourier coefficients lie in $[-1,1]$, strictly within the
permitted interval $[-2,2]$. The available statements of Bober's result
establish the same qualitative conclusion; they do not provide the details
needed to compare the constructions or their quantitative properties.

The planar curves associated with Rudin--Shapiro sums are classical;
see Mend\`es France and Tenenbaum \cite{MFT} and Mend\`es France
\cite{MF}. We use two precise results on their lattice points, in the
form given by Rampersad and Shallit \cite{RS}.
The Fourier estimate needed here follows from the complementary-polynomial
identity for the Rudin--Shapiro sequence. It holds at every real frequency,
which allows a rotation and reflection to impose the symmetry in
\eqref{eq:support}.

\begin{theorem}[An explicit space-filling curve]\label{thm:curve}
There is an explicitly defined function $f\in\calS$ such that
\begin{enumerate}
 \item $f(0)=f(1)=0$ and $f(t)=-\overline{f(1-t)}$;
 \item $|f(t)-f(s)|\leq4|t-s|^{1/2}$ for $s,t\in[0,1]$;
 \item for every nonzero integer $h$,
 \begin{equation}\label{eq:strictcoeff}
   \alpha_h:=2\pi ih\,\widehat f(h)\in[-1,1];
 \end{equation}
 \item $f([0,1])$ contains the triangle
 \begin{equation}\label{eq:triangle}
   \Delta=\frac{\rho}{4}\{x+iy:0\leq y\leq x\leq\tfrac12\},
   \qquad \rho=\frac{1+i}{\sqrt2},
 \end{equation}
 of area $1/128$.
\end{enumerate}
\end{theorem}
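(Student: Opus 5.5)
The construction starts from the Rudin--Shapiro sequence $(r_n)_{n\ge0}$ and its planar walk. Following Mend\`es France--Tenenbaum, let $z_n=\sum_{k<n} r_k\,i^{k}$, or whichever version of the walk Rampersad--Shallit use. Its lattice points are described by two results quoted from \cite{RS}:
\begin{enumerate}
 \item the walk at scale $4^m$ (i.e.\ $z_{4^m}$) lands at a prescribed point, so that after rescaling the endpoint is fixed;
 \item every lattice point in a certain triangular sector of radius $\asymp 2^m$ is visited among $z_0,\dots,z_{4^m}$.
\end{enumerate}

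I define the curve $f_m$ on $[0,1]$ by linear interpolation of $2^{-m}z_{\lfloor 4^m t\rfloor}$. I then let $f$ be the uniform limit along $m$, which is where self-similarity of the Rudin--Shapiro polynomials $P_m,Q_m$ is used. Equivalently, and more cleanly, $f$ is defined through the limit of $t\mapsto 2^{-m}\sum_{k<4^m t} r_k e(k\theta)$ at the appropriate frequency. After this, I compose with a rotation by $\rho$ and a reflection. I also subtract the linear term, or symmetrize $t\mapsto 1-t$, so that $f(0)=f(1)=0$ and $f(t)=-\overline{f(1-t)}$ hold. The symmetry is what makes $\widehat f(h)$ purely imaginary: the relation $f(t)=-\overline{f(1-t)}$ gives $\widehat f(h)=-\overline{\widehat f(h)}$.

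For (ii), the key input is the complementary-polynomial identity $|P_m(x)|^2+|Q_m(x)|^2=2^{m+1}$, valid for all real $x$. It gives $|\sum_{k\in I}r_ke(kx)|\le C|I|^{1/2}$ for dyadic blocks $I$. Splitting an arbitrary interval into $O(1)$ blocks at each dyadic level and summing the geometric series yields the Hölder bound. I will track the constants to reach $4$; this needs care with the two dyadic ends of the interval.

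For (iii), I write $2\pi i h\widehat f(h)=\int_0^1 e(-ht)\,df(t)$. At level $m$ this is a Riemann--Stieltjes sum $2^{-m}\sum_k r_k\,(\text{phase})\,e(-hk/4^m)$, which is a Rudin--Shapiro polynomial evaluated at a real frequency. The identity bounds it by $\sqrt2$ times something. To get the sharp bound $1$, I will use that $\alpha_h$ is real, since $\widehat f(h)\in i\R$. Writing $\alpha_h$ as a real part of a combination of $P$ and $Q$ values with weights like $\tfrac12$, the bound $|aP+bQ|\le\sqrt{|a|^2+|b|^2}\sqrt{|P|^2+|Q|^2}$ then yields $|\alpha_h|\le1$ after normalization. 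Membership in $\calS$ follows from \eqref{eq:support}: $g(1)=0$, so $u=f$, and $|\widehat u(h)|\le (2\pi|h|)^{-1}<(\pi|h|)^{-1}$.

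For (iv), result (ii) from \cite{RS} shows that $f_m([0,1])$ contains every lattice point of $2^{-m}\Z[i]$ in the rescaled triangle. Passing to the limit with the uniform Hölder modulus, $f([0,1])$ is compact and contains a dense subset of $\Delta$, hence all of $\Delta$. The area is $\tfrac1{16}\cdot\tfrac18=1/128$.

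I expect the main obstacle to be (iii): getting exactly the constant $1$. This requires matching the rotation/reflection so that the Stieltjes sums become real combinations to which the complementary identity applies with no loss. I would first try expressing $\alpha_h$ as $2^{-m-1}\operatorname{Re}\bigl(\overline{\omega}P_m(x)+\omega' Q_m(x)\bigr)$ and applying Cauchy--Schwarz.
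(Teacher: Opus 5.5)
\textbf{Main gap: the construction of $f$ is never fixed.} The step you leave open is the one that decides the theorem. Of your two options, subtracting the linear term gives $f(1)=0$ but not $f(t)=-\overline{f(1-t)}$. For the Rudin--Shapiro curve the symmetry already fails at the vertices $t=\tfrac14,\tfrac12,\tfrac34$, whatever rotation or reflection you apply. For a continuous $f$ with $f(0)=f(1)=0$, the condition $\widehat f(h)\in i\R$ for all $h\neq0$ is equivalent to that symmetry, so such an $f$ is not in $\calS$. A pointwise symmetrization such as $\tfrac12\bigl(g(t)-\overline{g(1-t)}\bigr)$ has the symmetry, but its image bears no simple relation to that of $g$, so you lose any handle on (iv).

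What works, and what the paper does, is concatenation:
\begin{itemize}
\item[(a)] Take the walk $W(n)=\sum_{j<n}r(j)\bigl(1+i(-1)^j\bigr)$. It satisfies $W(4n)=2W(n)$ exactly; the walk $\sum_{k<n}r_ki^k$ does not.
\item[(b)] Let $\gamma$ be the uniform limit of the interpolants of $2^{-m}W(4^mt)$, so that $\gamma(1)=1+i$.
\item[(c)] Set $f(t)=\tfrac\rho4\gamma(2t)$ on $[0,\tfrac12]$ and $f(t)=-\tfrac{\bar\rho}4\overline{\gamma(2-2t)}$ on $[\tfrac12,1]$.
\end{itemize}
Continuity at $t=\tfrac12$ is exactly the condition $\rho(1+i)\in i\R$, which is where $\rho$ comes from. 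The first half carries $\Delta$. The constants $4$ and $1$ follow from the factor $\tfrac14$ together with the time compression by $2$. None of (i)--(iv) can be checked with the stated constants before this choice is made.

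\textbf{The remaining parts, once $f$ is fixed.} Item (iii) is not the delicate point you expect. Change variables on each half and use $e(-h)=1$. For the interpolants $f_m$ this gives $\int_0^1f_m'(t)e(-ht)\,dt=\tfrac14\rho A_m(h/2)+\tfrac14\overline{\rho A_m(h/2)}=\tfrac12\Rea\bigl(\rho A_m(h/2)\bigr)$. Here $A_m(\xi)=\int_0^1\gamma_m'(u)e(-\xi u)\,du=N^{-1/2}D(\xi/N)\bigl(P_{2m}(z)+iP_{2m}(-z)\bigr)$, with $N=4^m$, $z=e(-\xi/N)$ and $|D|\leq1$.
\begin{itemize}
\item[(a)] Realness of $\alpha_h$ is automatic from the construction.
\item[(b)] The bound $|A_m(\xi)|\leq2$ follows from Cauchy--Schwarz and $|P_{2m}(z)|^2+|P_{2m}(-z)|^2=2N$. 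The constant $1$ is pure normalization, not a lossless matching.
\item[(c)] The identity is needed at all real frequencies because odd $h$ give $\xi=h/2$.
\end{itemize}

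For (iv), the Rampersad--Shallit lattice-point theorem only says that each pair $(X,Y)$ with $X\geq Y\geq0$ and $X\equiv Y\pmod2$ is attained at \emph{some} index. So not every point of $2^{-m}\Z[i]$ is hit, though the admissible points are still dense. To place the index below $4^m$ you need the Brillhart--Morton bound $s_*(n)>\sqrt{3n/5}$. It gives $k<\tfrac53X^2<4^m$ whenever $X<2^{m-1}$.

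Your dyadic-block route to (ii) is a valid alternative to the paper's slope-plus-refinement-error argument. An aligned block of length $2^q$ has $|\Delta W|=2^{(q+1)/2}$ exactly. Summing over the two ends of an interval gives $|\gamma(t)-\gamma(s)|\leq(4+2\sqrt2)|t-s|^{1/2}$, hence a Hölder constant $2+\sqrt2<4$ for $f$.
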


The explicit definition is given in \eqref{eq:sewing} below.
The coefficient bound \eqref{eq:strictcoeff} is half the bound allowed
by \eqref{eq:support}. This margin is convenient in the
following general estimate. Its proof does not use space-filling geometry.

\begin{theorem}[A uniform small-ball lower bound]\label{thm:smallball}
For every $L\geq1$, there is a constant $C_L>0$ with the following
property. Suppose that $g\colon[0,1]\to\C$ is continuous,
$g(0)=g(1)=0$,
\begin{equation}\label{eq:generalclass}
 |g(t)-g(s)|\leq L|t-s|^{1/2},\qquad
 2\pi ih\,\widehat g(h)\in[-1,1]\quad(h\ne0).
\end{equation}
Then, for $0<\varepsilon\leq1/4$,
\begin{equation}\label{eq:smallball}
 \Prob\bigl(\norm{K-g}<\varepsilon\bigr)
 \geq\exp\bigl\{-C_L\varepsilon^{-2}\log^2(2/\varepsilon)\bigr\}.
\end{equation}
In particular, the constant is absolute when $g=f$ in
Theorem~\ref{thm:curve}.
\end{theorem}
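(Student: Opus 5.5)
The strategy is to split $K-g$ into a low-frequency block $|h|\leq N$ and a high-frequency tail, with $N\asymp\varepsilon^{-2}\log(2/\varepsilon)$ or a similar choice. We then use independence of the $X_h$ to multiply the probabilities of two events. Since $g(0)=g(1)=0$, the function $u=g$ in \eqref{eq:support} satisfies $g(t)=\sum_{h\ne0}\widehat g(h)(e(ht)-1)$. Indeed $\widehat g(0)=-\sum_{h\neq0}\widehat g(h)$ by evaluating at $t=0$, and convergence is justified by $1/2$-Hölder continuity, at least in the Fejér/symmetric sense. Hence
\[
 K(t)-g(t)=tX_0+\sum_{h\ne0}\frac{e(ht)-1}{2\pi ih}\,(X_h-\alpha_h),\qquad \alpha_h=2\pi ih\,\widehat g(h)\in[-1,1].
\]
The first event is $E_1=\{|X_0|\leq\varepsilon/4\}\cap\{|X_h-\alpha_h|\leq\delta_h,\ 1\leq|h|\leq N\}$. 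Since $\alpha_h\in[-1,1]$ lies at distance at least $1$ from $\pm2$, the Sato--Tate density is bounded below by an absolute constant $c>0$ on $[\alpha_h-\delta,\alpha_h+\delta]$ for $\delta\leq1/2$. Therefore $\Prob(|X_h-\alpha_h|\leq\delta_h)\geq c\delta_h$, and this is exactly where the margin in \eqref{eq:strictcoeff} enters. With a uniform choice $\delta_h=\delta$, the low block contributes at most $\delta\sum_{|h|\leq N}(\pi|h|)^{-1}\ll\delta\log N$ to the sup norm, so $\delta\asymp\varepsilon/\log N$ suffices. Then $\Prob(E_1)\geq(c\varepsilon/\log N)^{2N+1}=\exp\{-O(N\log(2/\varepsilon))\}$.

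The second step handles the tail. We also need the tail of $g$ itself, $\sup_t|\sum_{|h|>N}\widehat g(h)(e(ht)-1)|$, to be small; this is where $L$ enters. Write $g-\sigma_N g$ for the Fejér or de la Vallée Poussin approximation. Rather than the sharp cutoff, it is cleaner to compare $g$ with a de la Vallée Poussin mean $V_Ng$. Then $\|g-V_Ng\|_\infty\ll L N^{-1/2}$, and we target $X_h$ near the coefficients of $V_Ng$ on $|h|\leq2N$; these still lie in $[-1,1]$ because they are multiplied by weights in $[0,1]$. The sup-norm error of the $g$-part is therefore $\ll L N^{-1/2}$, which forces $N\asymp L^2\varepsilon^{-2}$ and, combined with $E_1$, gives $\exp\{-C_L\varepsilon^{-2}\log(2/\varepsilon)\}$ from the low block. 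The random tail $T_N(t)=\sum_{|h|>2N}\frac{e(ht)-1}{2\pi ih}X_h$ is independent of $E_1$, and we need $\Prob(\norm{T_N}<\varepsilon/4)\geq1/2$. Here $\E|T_N(t)-T_N(s)|^2\ll\min(|t-s|,1/N)$, and a chaining or Dudley bound for this subgaussian series (bounded symmetric $X_h$, Hoeffding) gives $\E\norm{T_N}\ll N^{-1/2}\sqrt{\log N}$. Markov's inequality then requires $N\gg\varepsilon^{-2}\log(2/\varepsilon)$. With this $N$ the low-block exponent becomes $N\log(2/\varepsilon)\asymp\varepsilon^{-2}\log^2(2/\varepsilon)$, which is the claimed rate.

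Conclusion: $\Prob(\norm{K-g}<\varepsilon)\geq\Prob(E_1)\cdot\Prob(\norm{T_N}<\varepsilon/4)$. On $E_1$, the middle-range terms and the $V_N$ discrepancy must be accounted for. Frequencies $N<|h|\leq2N$ in $V_Ng$ carry weights $w_h\in[0,1]$, and the $X_h$ there are constrained near $w_h\alpha_h$, which the same density bound handles. The deterministic $g-V_Ng$ error is $\leq\varepsilon/4$ for $N\geq C L^2\varepsilon^{-2}$. The main obstacle is the uniform tail estimate $\E\norm{T_N}\ll\sqrt{\log N/N}$ with absolute constants. I would prove it by dyadic blocks $2^j<|h|\leq2^{j+1}$, evaluating each block on a grid of $O(2^j)$ points with a Bernstein inequality for the derivative, and applying Hoeffding with a union bound. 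For $g=f$ we have $L=4$ by Theorem~\ref{thm:curve}, so the constant is absolute.
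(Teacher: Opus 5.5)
Your proposal is correct and is essentially the paper's argument. In both, you pin the finitely many low-frequency $X_h$ within $\asymp\varepsilon/\log N$ of the damped coefficients of a smoothed approximation of $g$ (the paper uses the Fej\'er mean $\sigma_Mg-\sigma_Mg(0)$ where you use de la Vall\'ee Poussin; in either case the weights keep the targets in $[-1,1]$), and you control the independent tail by dyadic blocks, Bernstein's inequality on a grid, Hoeffding, a union bound and Markov, with $N\asymp_L\varepsilon^{-2}\log(2/\varepsilon)$. The one bookkeeping point to make explicit is that your target function is $V_Ng-V_Ng(0)$ rather than $V_Ng$; this costs only a factor $2$ in the deterministic error because $g(0)=0$.
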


We give an arithmetic consequence after the proofs. It uses the known
functional convergence of Kloosterman paths with both parameters varying;
no rate in the modulus is asserted.

\subsection*{Notation}
We write $\norm{v}=\sup_{0\leq t\leq1}|v(t)|$.
The notation $A\ll_L B$ means $|A|\leq C_LB$ with a constant depending
only on $L$; subscripts are omitted for absolute constants.
A curve is called space-filling here if its image has nonempty interior.

\section{The Rudin--Shapiro curve}

Define the Rudin--Shapiro sequence by
\begin{equation}\label{eq:rs}
 r(0)=1,\qquad r(2n)=r(n),\qquad r(2n+1)=(-1)^nr(n)
 \quad(n\geq0).
\end{equation}
We use partial sums with the upper endpoint excluded:
\begin{equation}\label{eq:partial}
 S(n)=\sum_{0\leq j<n}r(j),\qquad
 T(n)=\sum_{0\leq j<n}(-1)^jr(j),\qquad W(n)=S(n)+iT(n).
\end{equation}
For $m\geq0$, let $\gamma_m$ be the polygonal interpolation of
\begin{equation}\label{eq:vertices}
 \gamma_m(n/4^m)=2^{-m}W(n),\qquad 0\leq n\leq4^m.
\end{equation}

\begin{lemma}\label{lem:limit}
The sequence $(\gamma_m)$ converges uniformly to a continuous function
$\gamma$ satisfying $\gamma(0)=0$, $\gamma(1)=1+i$, and
\begin{equation}\label{eq:uniformerror}
 \norm{\gamma-\gamma_m}\leq\frac{\sqrt{10}}2\,2^{-m}.
\end{equation}
Moreover, $\gamma(n/4^m)=2^{-m}W(n)$ at every vertex of level $m$.
\end{lemma}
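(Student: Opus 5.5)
The plan is to show that the levels are nested, so that vertices of $\gamma_m$ are vertices of $\gamma_{m+1}$. I will then bound $\norm{\gamma_{m+1}-\gamma_m}$ by a geometric quantity and sum a telescoping series. The nesting comes from the four-block structure of the recursion \eqref{eq:rs}. Applying it twice gives, for $n\geq0$ and $\varepsilon=(-1)^n$,
\[
 r(4n)=r(n),\quad r(4n+1)=r(n),\quad r(4n+2)=\varepsilon r(n),\quad r(4n+3)=-\varepsilon r(n).
\]
Summing over the block $4n\leq j<4n+4$ gives $2r(n)$ for $S$. For $T$, with signs $+,-,+,-$, it gives $2\varepsilon r(n)=2(-1)^nr(n)$. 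Induction on $n$ then yields $W(4n)=2W(n)$. Hence $\gamma_{m+1}(n/4^m)=2^{-m-1}W(4n)=2^{-m}W(n)=\gamma_m(n/4^m)$, so level-$m$ vertices are preserved at every later level. This also gives the last claim of the lemma once uniform convergence is established.

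Next I fix a level-$m$ interval $[n/4^m,(n+1)/4^m]$. On it $\gamma_m$ is the segment in direction $W(n+1)-W(n)=r(n)(1+i\varepsilon)$, while $\gamma_{m+1}$ is a polygon with vertices at the quarter points. The difference of two piecewise linear functions that are both linear on each quarter subinterval is maximised at a quarter point. From the block values,
\[
 W(4n+j)-W(4n)=r(n)\cdot\begin{cases}1+i,& j=1,\\ 2,& j=2,\\ 2+\varepsilon+i\varepsilon,& j=3.\end{cases}
\]
It therefore suffices to compute
\[
 2^{-m-1}\bigl|W(4n+j)-W(4n)-\tfrac j2\,r(n)(1+i\varepsilon)\bigr|,\qquad j=1,2,3.
\]
With $|r(n)|=1$ the absolute values are as follows:
\begin{itemize}
 \item for $j=1$: $1/\sqrt2$ if $\varepsilon=1$, and $\sqrt{10}/2$ if $\varepsilon=-1$;
 \item for $j=2$: $|1-i\varepsilon|=\sqrt2$;
 \item for $j=3$: $|(\tfrac12+\varepsilon)-\tfrac{i\varepsilon}2|$, which equals $\sqrt{10}/2$ or $1/\sqrt2$.
\end{itemize}
The maximum is $\sqrt{10}/2$, so $\norm{\gamma_{m+1}-\gamma_m}\leq\frac{\sqrt{10}}2\,2^{-m-1}$.

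Summing over $k\geq m$ shows that $(\gamma_m)$ is uniformly Cauchy, with $\norm{\gamma-\gamma_m}\leq\sum_{k\geq m}\frac{\sqrt{10}}2 2^{-k-1}=\frac{\sqrt{10}}2 2^{-m}$, which is \eqref{eq:uniformerror}. The limit is continuous as a uniform limit of continuous functions. For the endpoints, $\gamma(0)=0$ because $W(0)=0$. Also $\gamma(1)=\gamma_0(1)=W(1)=r(0)+ir(0)=1+i$, using the vertex invariance. The only step needing care is the reduction of the sup-norm to the quarter-point vertices together with the case computation; the rest is routine.
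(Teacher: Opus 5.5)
Your proof is correct and follows essentially the same route as the paper. The four-block recursion gives $W(4n)=2W(n)$ and hence nested vertices, the refinement error is evaluated at the quarter points against the older chord (maximum $\sqrt{10}/2$ in unscaled units, i.e.\ $\frac{\sqrt{10}}{4}\,2^{-m}$ after rescaling), and a geometric series yields the bound $\frac{\sqrt{10}}{2}\,2^{-m}$. Your explicit case check of the three interior quarter points fills in the computation that the paper only asserts.
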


\begin{proof}
The recursion \eqref{eq:rs} gives
\begin{equation}\label{eq:fourrecursion}
 \begin{aligned}
 r(4n)&=r(n),& r(4n+1)&=r(n),\\
 r(4n+2)&=(-1)^nr(n),&r(4n+3)&=-(-1)^nr(n).
 \end{aligned}
\end{equation}
Consequently,
\begin{equation}\label{eq:scaling}
 S(4n)=2S(n),\qquad T(4n)=2T(n),\qquad W(4n)=2W(n).
\end{equation}
Thus consecutive interpolants agree at the older vertices, and
$W(4^m)=2^m(1+i)$.

To estimate the refinement error, put $a=r(n)$ and $b=(-1)^n$.
Relative to $W(4n)$, the four subsequent vertices of the unscaled finer
path are
\[
 a(1+i),\quad 2a,\quad a(2+b+ib),\quad 2a(1+ib).
\]
At the same four times the older chord has relative positions
$(j/2)a(1+ib)$, $1\leq j\leq4$.
The largest difference has modulus at most $\sqrt{10}/2$.
After rescaling, this gives
\[
 \norm{\gamma_{m+1}-\gamma_m}\leq\frac{\sqrt{10}}4\,2^{-m}.
\]
The difference is affine between successive finer vertices, so the
vertex estimate controls its uniform norm. Summing the resulting
geometric series proves \eqref{eq:uniformerror}. Agreement at older
vertices persists in the limit.
\end{proof}

\begin{lemma}\label{lem:holder}
For $s,t\in[0,1]$,
\begin{equation}\label{eq:holdergamma}
 |\gamma(t)-\gamma(s)|\leq8|t-s|^{1/2}.
\end{equation}
\end{lemma}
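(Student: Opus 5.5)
The plan is to compare $\gamma$ with a single well-chosen interpolant $\gamma_m$, using Lemma~\ref{lem:limit} for the uniform error and the explicit slopes of the polygon $\gamma_m$. No self-similarity of the limit curve is needed beyond what is already in Lemma~\ref{lem:limit}.

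\emph{Step 1: Lipschitz bound for $\gamma_m$.} Each step $W(n+1)-W(n)=r(n)\bigl(1+i(-1)^n\bigr)$ has modulus $\sqrt2$. Hence each segment of $\gamma_m$ has length $\sqrt2\,2^{-m}$ and is traversed in time $4^{-m}$. So $\gamma_m$ is Lipschitz with constant $\sqrt2\,2^m$, and
\[
 |\gamma_m(t)-\gamma_m(s)|\leq\sqrt2\,2^m|t-s| .
\]

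\emph{Step 2: choice of level.} Fix $s\neq t$ in $[0,1]$. Choose the integer $m\geq0$ with $4^{-m-1}<|t-s|\leq4^{-m}$; this is possible since $|t-s|\leq1$. Write $|t-s|=x\,4^{-m}$ with $x\in(1/4,1]$. By \eqref{eq:uniformerror} applied at $s$ and at $t$,
\[
 |\gamma(t)-\gamma(s)|\leq|\gamma_m(t)-\gamma_m(s)|+2\cdot\tfrac{\sqrt{10}}2\,2^{-m}
 \leq 2^{-m}\bigl(\sqrt2\,x+\sqrt{10}\bigr).
\]

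\emph{Step 3: optimisation in $x$.} Divide by $|t-s|^{1/2}=2^{-m}\sqrt x$. The ratio is bounded by
\[
 \phi(x)=\sqrt2\,\sqrt x+\frac{\sqrt{10}}{\sqrt x}.
\]
On $(1/4,1]$ the function $\phi$ is decreasing, because its critical point is at $x=\sqrt5>1$. Therefore
\[
 \phi(x)\leq\phi(1/4)=\frac{\sqrt2}2+2\sqrt{10}<0.71+6.33<8,
\]
which gives \eqref{eq:holdergamma}.

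The only delicate point is this last step: choosing the level $m$ naively gives a constant of about $2(\sqrt2+\sqrt{10})\approx9.2$. Getting below $8$ requires tracking $x$ as above, so that the Lipschitz term and the uniform-error term are balanced. If a sharper constant were ever needed, one could instead exploit the self-similarity \eqref{eq:scaling} to bound the oscillation of $\gamma$ on level-$m$ cells directly. The estimate above already suffices.
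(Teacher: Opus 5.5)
Your proof is correct and is essentially the paper's argument. The paper also picks the level $m$ with $4^{-m-1}<|t-s|\leq4^{-m}$, uses the Lipschitz constant $\sqrt2\,2^m$ of $\gamma_m$, and applies the uniform error from Lemma~\ref{lem:limit} at both $s$ and $t$. Your closing remark is overstated, though: optimising in $x$ is not needed to get below $8$. The paper bounds the two terms separately, using $|t-s|\leq4^{-m}$ in the first and $2^{-m}<2|t-s|^{1/2}$ in the second, which already gives $\sqrt2+2\sqrt{10}\approx7.74<8$. Your optimisation only improves this to $\sqrt2/2+2\sqrt{10}\approx7.03$.
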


\begin{proof}
The modulus of the slope of $\gamma_m$ is $\sqrt2\,2^m$.
For $\delta=|t-s|>0$, choose $m\geq0$ so that
$4^{-(m+1)}<\delta\leq4^{-m}$.
Lemma~\ref{lem:limit} then gives
\[
 |\gamma(t)-\gamma(s)|
 \leq\sqrt2\,2^m\delta+\sqrt{10}\,2^{-m}
 \leq(\sqrt2+2\sqrt{10})\sqrt\delta<8\sqrt\delta.
\]
\end{proof}

We next recall exactly which lattice-point facts are used. In the
inclusive convention
\[
 s_*(n)=\sum_{j=0}^nr(j),\qquad
 t_*(n)=\sum_{j=0}^n(-1)^jr(j),
\]
Rampersad and Shallit \cite[Theorem~31]{RS} prove that every nonzero
integer pair $(X,Y)$ with $X\geq Y\geq0$ and $X\equiv Y\pmod2$ occurs as
$(s_*(n),t_*(n))$. The Brillhart--Morton estimate
\cite{BM}, stated as \cite[Theorem~18]{RS}, is
\begin{equation}\label{eq:lowerRS}
 s_*(n)>\sqrt{3n/5}\qquad(n\geq1).
\end{equation}
Both statements are used as known results.

\begin{lemma}\label{lem:fill}
The image of $\gamma$ contains
\begin{equation}\label{eq:unrotatedtriangle}
 \{x+iy:0\leq y\leq x\leq\tfrac12\}.
\end{equation}
\end{lemma}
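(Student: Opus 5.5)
The plan is a compactness argument. For each level $m$ we find a vertex time $t_m=N_m/4^m\in[0,1]$ with $\gamma(t_m)=2^{-m}W(N_m)$ within $O(2^{-m})$ of the target point $z$. Then we pass to a limit using continuity of $\gamma$ (Lemma~\ref{lem:limit}). The vertex identity $\gamma(n/4^m)=2^{-m}W(n)$ from Lemma~\ref{lem:limit} turns the problem into a statement about which lattice points are hit by $W(n)$, and how early.

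Fix $z=x+iy$ with $0\leq y\leq x\leq\tfrac12$. The case $z=0$ is trivial because $\gamma(0)=0$, so assume $z\ne0$. Put $x'=2^mx$ and $y'=2^my$, so that $0\leq y'\leq x'\leq2^{m-1}$. Choose an integer pair $(X,Y)$ as follows: $Y=\lfloor y'\rfloor$, and $X$ is the largest integer $\leq x'$ with $X\equiv Y\pmod 2$ and $X\geq Y$. If this produces $(0,0)$, replace it by $(2,0)$.

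This choice has three properties.
\begin{itemize}
 \item $(X,Y)$ is a nonzero pair with $X\geq Y\geq0$ and $X\equiv Y\pmod2$.
 \item $|X+iY-2^mz|\leq c$ for an absolute constant $c$ (say $c=3$).
 \item $X\leq\max(x',2)\leq 2^{m-1}+2$.
\end{itemize}
The only point needing care is that $X\geq Y$. If $x'-Y<1$ with the wrong parity, take $X=Y$; the error is then still bounded by an absolute constant.

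By Rampersad--Shallit \cite[Theorem~31]{RS} there is $n\geq0$ with $(s_*(n),t_*(n))=(X,Y)$. In the exclusive convention \eqref{eq:partial} this reads $W(n+1)=X+iY$. We must ensure $N_m:=n+1\leq4^m$, so that $t_m$ lies in $[0,1]$; this is the one genuinely quantitative step. If $n=0$ there is nothing to check. Otherwise the Brillhart--Morton bound \eqref{eq:lowerRS} gives $X=s_*(n)>\sqrt{3n/5}$, hence
\[
 n<\tfrac53X^2\leq\tfrac53\bigl(2^{m-1}+2\bigr)^2 .
\]
This is at most $\tfrac{5}{12}4^m(1+o(1))<4^m-1$ once $m$ is larger than some absolute $m_0$. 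So for all $m\geq m_0$ we get $t_m\in[0,1]$ and
\[
 |\gamma(t_m)-z|=2^{-m}|X+iY-2^mz|\leq c\,2^{-m}.
\]

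Finally, by compactness of $[0,1]$ a subsequence $t_{m_k}$ converges to some $t\in[0,1]$. Continuity of $\gamma$ then gives $\gamma(t)=\lim\gamma(t_{m_k})=z$. The main obstacle is the time-budget step in the previous paragraph. It is exactly why the triangle is restricted to $x\leq\tfrac12$: the constant $5/3$ in \eqref{eq:lowerRS} must beat the factor $4$ coming from $X\approx 2^{m}x$, and $\tfrac53(2^{m-1})^2<4^m$ leaves room for the $O(1)$ rounding errors.
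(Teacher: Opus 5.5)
Your proof is correct and follows the paper's argument. The ingredients are the same: Rampersad--Shallit's Theorem~31 to hit the lattice point, the Brillhart--Morton bound \eqref{eq:lowerRS} to keep the time $(n+1)/4^m$ at most $1$, the vertex identity of Lemma~\ref{lem:limit}, and a compactness passage to the limit. The differences are cosmetic: you handle the closed triangle directly by explicit rounding (using that Theorem~31 allows $Y=0$ and $X=Y$), while the paper takes interior points and then uses that the compact image contains their closure.
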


\begin{proof}
Fix $0<y<x<1/2$. Choose positive integers $X_m>Y_m$ of the same parity
with $2^{-m}(X_m+iY_m)\to x+iy$.
By the lattice-point result there is an index $k_m$ such that
$(s_*(k_m),t_*(k_m))=(X_m,Y_m)$.
For all sufficiently large $m$, \eqref{eq:lowerRS} implies
\[
 \frac{k_m+1}{4^m}
 \leq\frac53\left(\frac{X_m}{2^m}\right)^2+4^{-m}<1.
\]
Indeed, the upper bound tends to $5x^2/3<5/12$.
The shift by one converts the inclusive sums to \eqref{eq:partial}.
By Lemma~\ref{lem:limit},
\[
 \gamma\left(\frac{k_m+1}{4^m}\right)
 =2^{-m}(X_m+iY_m).
\]
Since $\gamma([0,1])$ is compact, it contains $x+iy$ and hence the
closure of the set of all such points. This is
\eqref{eq:unrotatedtriangle}.
\end{proof}

\section{Fourier bounds and reflection}

For $q\geq0$, write
\[
 P_q(z)=\sum_{0\leq n<2^q}r(n)z^n.
\]
We use the classical complementary-polynomial identity associated with
the Rudin--Shapiro sequence \cite{Rudin}; its short proof fixes the
normalization and the particular pair of polynomials.

\begin{lemma}\label{lem:complement}
For $q\geq0$ and $|z|=1$,
\begin{equation}\label{eq:complement}
 |P_q(z)|^2+|P_q(-z)|^2=2^{q+1}.
\end{equation}
\end{lemma}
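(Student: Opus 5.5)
The identity follows from a doubling recursion for $P_q$ together with the parallelogram law, by induction on $q$. The strategy is to pair $P_q$ with its companion $Q_q(z)=P_q(-z)$. Since $(-1)^n$ is the sign pattern that distinguishes the two halves of each block, $Q_q(z)=\sum_{0\leq n<2^q}(-1)^n r(n)z^n$. The recursion \eqref{eq:rs} should express $P_{q+1}$ in terms of $P_q$ and $Q_q$ evaluated at $z^2$, weighted by $1$ and $z$.

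\textbf{Step 1 (recursion).} Split the sum defining $P_{q+1}$ into even and odd indices $n=2m$ and $n=2m+1$ with $0\leq m<2^q$. Using $r(2m)=r(m)$ and $r(2m+1)=(-1)^m r(m)$, this gives
\[
 P_{q+1}(z)=P_q(z^2)+z\,Q_q(z^2).
\]
Replacing $z$ by $-z$, and noting that $(-z)^2=z^2$, gives
\[
 P_{q+1}(-z)=P_q(z^2)-z\,Q_q(z^2).
\]

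\textbf{Step 2 (parallelogram law).} For $|z|=1$, put $A=P_q(z^2)$ and $B=zQ_q(z^2)$, so that $|B|=|Q_q(z^2)|$. The parallelogram law $|A+B|^2+|A-B|^2=2(|A|^2+|B|^2)$ then yields
\[
 |P_{q+1}(z)|^2+|P_{q+1}(-z)|^2=2\bigl(|P_q(w)|^2+|P_q(-w)|^2\bigr),
 \qquad w=z^2 .
\]
Here we used $Q_q(w)=P_q(-w)$.

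\textbf{Step 3 (induction).} The point $w=z^2$ also lies on the unit circle, so the inductive hypothesis bounds the right-hand side by $2\cdot 2^{q+1}=2^{q+2}$. For the base case $q=0$ we have $P_0(z)=r(0)=1$, so the left-hand side equals $1+1=2=2^{1}$.

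The only delicate point is checking the identification $Q_q(w)=P_q(-w)$, that is, that the sign $(-1)^m$ in the odd-index recursion is exactly what is produced by evaluating $P_q$ at $-w$. This holds because $(-w)^m=(-1)^m w^m$. Beyond that, the argument is a routine induction.
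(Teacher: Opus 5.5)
Your proof is correct and is essentially the paper's argument. You split $P_{q+1}$ into even and odd indices to get $P_{q+1}(\pm z)=P_q(z^2)\pm zP_q(-z^2)$, then apply the parallelogram law (left implicit in the paper) to double the level-$q$ sum at $w=z^2$, with base case $2$. One small wording fix: in Step 3 the inductive hypothesis makes the right-hand side exactly $2\cdot 2^{q+1}$, not just bounded by it, and the equality is what the identity needs.
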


\begin{proof}
Separating the even and odd indices in \eqref{eq:rs} gives
\[
 P_{q+1}(z)=P_q(z^2)+zP_q(-z^2),\qquad
 P_{q+1}(-z)=P_q(z^2)-zP_q(-z^2).
\]
The sum of the squares of their moduli is twice the corresponding
sum at level $q$. At level zero it equals $2$.
\end{proof}

\begin{proposition}\label{prop:transform}
For every $m\geq0$ and $\xi\in\R$,
\begin{equation}\label{eq:transformbound}
 A_m(\xi):=\int_0^1\gamma_m'(u)e(-\xi u)\,du
 \quad\text{satisfies}\quad |A_m(\xi)|\leq2.
\end{equation}
\end{proposition}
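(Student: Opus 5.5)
The plan is to compute $A_m(\xi)$ explicitly from the polygonal structure of $\gamma_m$. This reduces it to a combination of the two Rudin--Shapiro polynomials $P_{2m}(z)$ and $P_{2m}(-z)$, which Lemma~\ref{lem:complement} controls at every point of the unit circle. Since Lemma~\ref{lem:complement} holds for all $|z|=1$, no integrality of $\xi$ is needed.

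\emph{Step 1: the derivative of $\gamma_m$.} On each interval $I_n=[n/4^m,(n+1)/4^m)$ with $0\leq n<4^m$, the curve $\gamma_m$ is affine. By \eqref{eq:vertices} and \eqref{eq:partial}, its increment there is $2^{-m}(W(n+1)-W(n))=2^{-m}r(n)(1+i(-1)^n)$. Hence
\[
 \gamma_m'(u)=2^m r(n)\bigl(1+i(-1)^n\bigr),\qquad u\in I_n .
\]

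\emph{Step 2: factor the integral.} Substitute $u=(n+v)/4^m$ with $v\in[0,1]$ and put $z=e(-\xi/4^m)$, so that $|z|=1$. Set $\varphi(\eta)=\int_0^1e(-\eta v)\,dv$, which satisfies $|\varphi(\eta)|\leq1$. Then
\[
 A_m(\xi)=2^m4^{-m}\varphi(\xi/4^m)\sum_{0\leq n<4^m}r(n)\bigl(1+i(-1)^n\bigr)z^n
 =2^{-m}\varphi(\xi/4^m)\bigl(P_{2m}(z)+iP_{2m}(-z)\bigr),
\]
using $4^m=2^{2m}$, so that the sum runs exactly over the indices of $P_{2m}$.

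\emph{Step 3: apply Lemma~\ref{lem:complement}.} By Cauchy--Schwarz,
\[
 |P_{2m}(z)+iP_{2m}(-z)|\leq\sqrt2\,\bigl(|P_{2m}(z)|^2+|P_{2m}(-z)|^2\bigr)^{1/2}.
\]
Lemma~\ref{lem:complement} with $q=2m$ evaluates the right-hand side as $\sqrt2\cdot2^{m+1/2}=2^{m+1}$. Combining this with Step 2 gives $|A_m(\xi)|\leq2^{-m}\cdot2^{m+1}=2$.

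There is no real obstacle here. The only point requiring care is the bookkeeping in Step 2, namely matching the exponent $2m$ with the degree range $0\leq n<4^m$ of $P_{2m}$. One must also check that the scaling factors $2^m$ (slope), $4^{-m}$ (interval length) and $2^{m+1}$ (polynomial size) cancel exactly to the constant $2$.
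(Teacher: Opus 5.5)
Your proposal is correct and follows essentially the same route as the paper. Both compute the slope $2^m r(n)(1+i(-1)^n)$ on each subinterval and factor $A_m(\xi)$ as $2^{-m}D(\xi/4^m)\bigl(P_{2m}(z)+iP_{2m}(-z)\bigr)$ with $z=e(-\xi/4^m)$, then conclude with Cauchy--Schwarz and Lemma~\ref{lem:complement} at $q=2m$.
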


\begin{proof}
Put $N=4^m$. On the interval $(n/N,(n+1)/N)$,
\[
 \gamma_m'(u)=\sqrt N\,[r(n)+i(-1)^nr(n)].
\]
For $z=e(-\xi/N)$ and
\[
 D(v)=\int_0^1e(-vs)\,ds
     =e(-v/2)\frac{\sin(\pi v)}{\pi v},\qquad D(0)=1,
\]
integration on each interval yields the exact formula
\begin{equation}\label{eq:transformformula}
 A_m(\xi)=\frac{D(\xi/N)}{\sqrt N}
          \bigl(P_{2m}(z)+iP_{2m}(-z)\bigr).
\end{equation}
By \eqref{eq:complement} and Cauchy--Schwarz,
\[
 |P_{2m}(z)+iP_{2m}(-z)|
 \leq\sqrt2\bigl(|P_{2m}(z)|^2+|P_{2m}(-z)|^2\bigr)^{1/2}
 =2\sqrt N.
\]
Since $|D(v)|\leq1$, the proposition follows.
\end{proof}

\begin{proof}[Proof of Theorem~\ref{thm:curve}]
Let $\rho=(1+i)/\sqrt2$ and define
\begin{equation}\label{eq:sewing}
 f(t)=
 \begin{cases}
  \dfrac\rho4\gamma(2t),&0\leq t\leq1/2,\\[4pt]
  -\dfrac{\bar\rho}4\overline{\gamma(2-2t)},&1/2\leq t\leq1.
 \end{cases}
\end{equation}
The two values at $t=1/2$ agree because
$\rho(1+i)=i\sqrt2=-\bar\rho(1-i)$.
The endpoint and symmetry assertions follow immediately.
Lemma~\ref{lem:holder} gives a H\"older constant $2\sqrt2$ on each
half-interval. If $s\leq1/2\leq t$, split at $1/2$ and use
$\sqrt{1/2-s}+\sqrt{t-1/2}\leq\sqrt{2(t-s)}$.
Thus the constant on $[0,1]$ is at most $4$.
Lemma~\ref{lem:fill} shows that the first half of the image contains
$\Delta$, whose area is $(1/16)(1/8)=1/128$.

Let $f_m$ be defined by \eqref{eq:sewing} with $\gamma_m$ in place of
$\gamma$. These functions are continuous, piecewise affine, closed, and
converge uniformly to $f$.
For $h\in\Z$, changing variables on the two halves gives
\begin{align}
 \int_0^1f_m'(t)e(-ht)\,dt
 &=\tfrac14\rho A_m(h/2)
   +\tfrac14\bar\rho\,\overline{A_m(h/2)}\notag\\
 &=\tfrac12\Rea\bigl(\rho A_m(h/2)\bigr)\in[-1,1].
 \label{eq:sewntransform}
\end{align}
Here $e(-h)=1$ is used on the second half. Proposition~\ref{prop:transform}
applies also when $h/2$ is not integral.
For $h\ne0$, integration by parts, with vanishing boundary values, gives
\[
 2\pi ih\,\widehat f_m(h)=\int_0^1f_m'(t)e(-ht)\,dt\in[-1,1].
\]
Uniform convergence allows passage to the limit in each Fourier
coefficient. This proves \eqref{eq:strictcoeff} without any assertion
about differentiability of $f$ or convergence of the derivatives.
Finally, \eqref{eq:support} implies $f\in\calS$.
\end{proof}

\begin{remark}
Theorem~\ref{thm:curve} constructs an element of the support with
nonempty interior in its image. It does not give such a property for
almost every realization of $K$.
\end{remark}

\section{A lower bound for small balls}

We first give the tail estimate used in the proof of
Theorem~\ref{thm:smallball}. Including incomplete blocks also makes the
uniform-convergence argument independent of the symmetric truncation point.

\begin{lemma}\label{lem:tail}
There is an absolute constant $C>0$ such that, for every integer $M\geq1$,
\begin{equation}\label{eq:tail}
 \E\norm{\sum_{|h|>M}\frac{e(ht)-1}{2\pi ih}X_h}
 \leq C\sqrt{\frac{\log(2M)}M}.
\end{equation}
The symmetric series defining the tail converges almost surely
uniformly and in expected uniform norm.
\end{lemma}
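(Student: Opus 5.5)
The plan is a dyadic block decomposition, a maximal inequality inside each block, and then summation of the block bounds. For $k\geq0$ let $B_k=\{h:2^kM<|h|\leq2^{k+1}M\}$. Define
\[
 Z_k(t)=\sum_{h\in B_k}\frac{e(ht)-1}{2\pi ih}X_h .
\]
The $X_h$ are independent, centred (the Sato--Tate law is symmetric) and bounded by $2$. Each $Z_k$ is therefore a finite trigonometric polynomial with independent bounded symmetric coefficients, of degree at most $N_k=2^{k+1}M$.

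The first step is to bound $\E\norm{Z_k}$. On a grid of $O(N_k)$ equally spaced points in $[0,1]$ I use Hoeffding's inequality. At each grid point, the real and imaginary parts are sums of independent bounded terms with variance proxy
\[
 \sigma_k^2\ll\sum_{h\in B_k}h^{-2}\ll(2^kM)^{-1}.
\]
A union bound then gives an expected maximum over the grid of $\ll\sigma_k\sqrt{\log N_k}$.

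To pass from the grid to all $t$, I use Bernstein's inequality. The derivative of $Z_k$ is a trigonometric polynomial of degree $\leq N_k$ with $\norm{Z_k'}\leq 2\pi N_k\norm{Z_k}$. Take a grid of mesh $1/(8\pi N_k)$. For any $t$, the nearest grid point $t_j$ then satisfies $|Z_k(t)-Z_k(t_j)|\leq\tfrac14\norm{Z_k}$. Hence $\norm{Z_k}\leq\tfrac43\max_j|Z_k(t_j)|$.

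(The constant term $-\sum_{h\in B_k}X_h/(2\pi ih)$ is part of the polynomial; Bernstein still applies to $Z_k$, since subtracting a constant does not change $Z_k'$. Alternatively, bound the constant separately, since it has the same variance order.)

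This yields
\[
 \E\norm{Z_k}\ll\sqrt{\frac{\log(2^{k+1}M)}{2^kM}}\ll 2^{-k/2}\sqrt{\frac{k+\log(2M)}{M}} .
\]
Summing over $k$ gives $\sum_k\E\norm{Z_k}\ll\sqrt{\log(2M)/M}$, which is \eqref{eq:tail} for the dyadic-block arrangement of the tail.

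For the convergence statement, let $M'>M$ be arbitrary. The symmetric partial sum $\sum_{M<|h|\leq M'}$ is a union of complete blocks plus one incomplete block $\{h:2^kM<|h|\leq M'\}$. The argument above applies verbatim to any sub-block of $B_k$, because both the degree bound and the variance bound only decrease. This is the "incomplete blocks" remark in the paper.

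Since $\sum_k\E\norm{Z_k}<\infty$, the series $\sum_k\norm{Z_k}$ converges almost surely and in $L^1$. I also need the maximum over incomplete sub-blocks of $B_k$, since the truncation point is arbitrary. For symmetric independent terms, Lévy's inequality (or Doob's maximal inequality applied to the vector-valued partial sums in $C([0,1])$) controls $\E\max_{M'}\norm{\text{partial sum in }B_k}$ by $2\E\norm{Z_k}$. Therefore the symmetric partial sums are Cauchy in uniform norm almost surely and in $L^1$. The limit is the same as the dyadic arrangement, so \eqref{eq:tail} holds for the symmetric tail by Fatou's lemma.

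The main obstacle is the sup-norm estimate with the correct $\sqrt{\log}$ factor: the chaining step and uniformity in the incomplete blocks. I will handle it with the Bernstein discretization plus a Gaussian-type tail. For the maximum over truncation points, I would try Lévy's maximal inequality first. If that is awkward, I would instead bound the incomplete blocks individually and use a union bound over the $O(2^kM)$ possible truncation points. That costs only another logarithm inside the square root, which is still summable across blocks and absorbed into the constant.
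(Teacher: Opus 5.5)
Your proposal is correct and is essentially the paper's argument. Both use dyadic blocks, Hoeffding at the points of a grid of size $O(2^kM)$ justified by Bernstein's inequality, and summation of the block bounds $\ll\sqrt{\log(2^{k+1}M)/(2^kM)}$. The one difference is in the incomplete blocks: the paper simply adds the $L+1$ truncation points $H$ to the union bound, which is your fallback and only changes the logarithm by a constant factor, whereas you use L\'evy's inequality and the symmetry of the Sato--Tate law. Your parenthetical appeal to Doob would give the maximal bound only in $L^2$ form, $\E\max\norm{\cdot}\leq2(\E\norm{Z_k}^2)^{1/2}$, and not with $2\E\norm{Z_k}$.
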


\begin{proof}
For integers $L\geq1$ and $L\leq H\leq2L$, put
\[
 Z_{L,H}(t)=\sum_{L<|h|\leq H}\frac{X_h}{2\pi ih}e(ht).
\]
Since the variables are independent, centred and bounded by $2$,
Hoeffding's inequality, applied to the real and imaginary parts, gives
\begin{equation}\label{eq:hoeffding}
 \Prob(|Z_{L,H}(t)|>v)\leq4\exp(-cLv^2)
\end{equation}
with an absolute $c>0$, uniformly in $H$ and $t$.
Indeed, the sum of the squared coefficient bounds is $O(L^{-1})$.

Each polynomial has degree at most $2L$ and satisfies Bernstein's
inequality
$\norm{Z_{L,H}'}\leq4\pi L\norm{Z_{L,H}}$.
A grid of $\lceil16\pi L\rceil$ equally spaced points on $\R/\Z$
therefore controls the uniform norm within a factor of $2$.
Taking a union bound in \eqref{eq:hoeffding} over this grid and over
the $L+1$ possible values of $H$ gives
\[
 \Prob\left(\max_{L\leq H\leq2L}\norm{Z_{L,H}}>2v\right)
 \leq \min\{1,CL^2\exp(-cLv^2)\}.
\]
Integrating this estimate yields
\begin{equation}\label{eq:incomplete}
 \E\max_{L\leq H\leq2L}\norm{Z_{L,H}}
 \ll\sqrt{\frac{\log(2L)}L}.
\end{equation}
The extra union over $H$ changes only the absolute constant.

For the process increments, subtract $Z_{L,H}(0)$; this multiplies the
bound by at most $2$.
The sum of the expectations of these maximal block norms, for
$L=M,2M,4M,\ldots$, is
$O(\sqrt{\log(2M)/M})$.
It follows that their sum is finite almost surely, so full blocks
converge uniformly and the incomplete blocks tend uniformly to zero.
The same summation proves \eqref{eq:tail} and convergence in expected
uniform norm.
\end{proof}

\begin{proof}[Proof of Theorem~\ref{thm:smallball}]
Set $\beta_h=2\pi ih\,\widehat g(h)\in[-1,1]$ for $h\ne0$.
Let $\sigma_M g$ denote the Fej\'er mean of order $M$, and put
\begin{equation}\label{eq:fejer}
 q_M(t)=\sigma_Mg(t)-\sigma_Mg(0)
 =\sum_{0<|h|\leq M}\left(1-\frac{|h|}{M+1}\right)
          \beta_h\frac{e(ht)-1}{2\pi ih}.
\end{equation}
Because $g(0)=g(1)=0$, its periodic extension has modulus of continuity
at most $\sqrt2L\sqrt u$ at circular distance $u\leq1/2$.
Indeed, an interval crossing the endpoint is split there, and
$\sqrt a+\sqrt b\leq\sqrt{2(a+b)}$.
The usual bound for the Fej\'er kernel is
\[
 F_M(u)\ll\min\left(M+1,\frac1{(M+1)u^2}\right),
 \qquad0<|u|\leq1/2.
\]
Integrating $L\sqrt{|u|}F_M(u)$, and splitting at $|u|=(M+1)^{-1}$,
gives
\begin{equation}\label{eq:fejererror}
 \norm{q_M-g}\ll L M^{-1/2}.
\end{equation}

Write $\ell=\log(2/\varepsilon)$ and choose
\[
 M=\left\lceil A_L\varepsilon^{-2}\ell\right\rceil
\]
with $A_L$ sufficiently large, depending only on $L$.
Then \eqref{eq:fejererror}, Lemma~\ref{lem:tail}, and Markov's inequality
give
\begin{equation}\label{eq:threeevents}
 \norm{q_M-g}<\varepsilon/3,\qquad
 \Prob(\norm{R_M}<\varepsilon/3)\geq1/2,
 \quad
 R_M(t)=\sum_{|h|>M}\frac{e(ht)-1}{2\pi ih}X_h.
\end{equation}
For completeness, $\log(2M)\ll(1+\log A_L)\ell$, uniformly for
$0<\varepsilon\leq1/4$, so increasing $A_L$ makes the expected norm
of $R_M$ smaller than $\varepsilon/6$.

Set $b_0=0$ and
$b_h=(1-|h|/(M+1))\beta_h$ for $0<|h|\leq M$. All the $b_h$ lie in
$[-1,1]$. Put
\[
 \eta=\frac{\varepsilon}{12(1+\log M)},\qquad
 E_M=\bigcap_{|h|\leq M}\{|X_h-b_h|<\eta\}.
\]
On $E_M$, the low-frequency part of $K$ differs from $q_M$ by at most
\begin{equation}\label{eq:lowerror}
 \eta\left(1+\frac2\pi\sum_{h=1}^M\frac1h\right)
 \leq2\eta(1+\log M)=\varepsilon/6.
\end{equation}
The Sato--Tate density is bounded below on $[-5/4,5/4]$.
In particular, for $b\in[-1,1]$ and $0<\eta\leq1/4$,
\[
 \Prob(|X_h-b|<\eta)\geq\eta/3.
\]
Independence gives
$\Prob(E_M)\geq(\eta/3)^{2M+1}$.
Moreover, $E_M$ is independent of the tail event in
\eqref{eq:threeevents}, which uses only $|h|>M$.
Combining \eqref{eq:threeevents} and \eqref{eq:lowerror}, we obtain
\[
 \Prob(\norm{K-g}<\varepsilon)
 \geq\tfrac12(\eta/3)^{2M+1}.
\]
Finally, $M\ll_L\varepsilon^{-2}\ell$ and
$\log(3/\eta)\ll_L\ell$. This proves \eqref{eq:smallball}.
\end{proof}

\begin{remark}
The proof is a finite-coefficient approximation argument with a
subgaussian estimate for the remaining series. No optimality is claimed
for the exponent or the logarithmic factor in \eqref{eq:smallball}.
\end{remark}

\section{Approximation by arithmetic Kloosterman paths}

For a prime $p$ and $(a,b)\in(\F_p^\times)^2$, let
$K_p(\cdot;a,b)$ be the polygonal interpolation of
\begin{equation}\label{eq:arithvertices}
 K_p\left(\frac j{p-1};a,b\right)
 =\frac1{\sqrt p}\sum_{1\leq x\leq j}
     \ee{\frac{ax+b\bar x}{p}},\qquad0\leq j\leq p-1,
\end{equation}
where $\bar x$ is the inverse of $x$ modulo $p$.
When both parameters are uniform and nonzero,
\cite[Theorem~1.5]{KSpaths} gives
$K_p\Rightarrow K$ in $C([0,1],\C)$.
The parameters in that theorem are related to those above by the
bijection $(\alpha,c)\mapsto(a,b)=(\alpha c,\alpha)$.

\begin{corollary}\label{cor:arithmetic}
For the function $f$ of Theorem~\ref{thm:curve}, there is an absolute
$C>0$ such that, for $0<\varepsilon\leq1/4$,
\begin{equation}\label{eq:arithprob}
 \liminf_{\substack{p\to\infty\\p\text{ prime}}}\frac1{(p-1)^2}
 \#\{(a,b)\in(\F_p^\times)^2:
          \norm{K_p(\cdot;a,b)-f}<\varepsilon\}
 \geq\exp\{-C\varepsilon^{-2}\log^2(2/\varepsilon)\}.
\end{equation}
\end{corollary}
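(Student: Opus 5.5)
The plan is to combine the weak convergence $K_p\Rightarrow K$ from \cite[Theorem~1.5]{KSpaths} with the portmanteau theorem for open sets, and then apply Theorem~\ref{thm:smallball} to $g=f$. The set
\[
 U_\varepsilon=\{v\in C([0,1],\C):\norm{v-f}<\varepsilon\}
\]
is open in the uniform topology. The count in \eqref{eq:arithprob}, divided by $(p-1)^2$, is exactly $\Prob_p(K_p(\cdot;a,b)\in U_\varepsilon)$ when $(a,b)$ is uniform on $(\F_p^\times)^2$. The bijection $(\alpha,c)\mapsto(\alpha c,\alpha)$ preserves the uniform measure on $(\F_p^\times)^2$. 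So the law of $K_p(\cdot;a,b)$ under our parametrization coincides with the law considered in \cite[Theorem~1.5]{KSpaths}, and that theorem applies as stated.

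The portmanteau theorem gives, for open $U$,
\[
\liminf_p\Prob(K_p\in U)\geq\Prob(K\in U).
\]
Applying this with $U=U_\varepsilon$ reduces the corollary to the bound
\[
\Prob(\norm{K-f}<\varepsilon)\geq\exp\{-C\varepsilon^{-2}\log^2(2/\varepsilon)\}.
\]
To get it from Theorem~\ref{thm:smallball} with $g=f$, I check \eqref{eq:generalclass} using Theorem~\ref{thm:curve}:
\begin{enumerate}
 \item Part (i) gives $f(0)=f(1)=0$.
 \item Part (ii) gives the H\"older bound with $L=4$.
 \item Part (iii) gives $2\pi ih\,\widehat f(h)=\alpha_h\in[-1,1]$ for $h\ne0$.
\end{enumerate}
The constant $C=C_4$ is then absolute, and the range $0<\varepsilon\leq1/4$ matches.

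The one point that needs care is that the two processes live on the same space $C([0,1],\C)$ with the same norm. Both $K_p$ (polygonal) and $K$ (by the a.s. uniform convergence from Lemma~\ref{lem:tail}) are random continuous functions, so weak convergence there is meaningful and portmanteau applies directly. No rate of convergence in $p$ is needed, since only a $\liminf$ is asserted.

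I expect no genuine obstacle. The only subtlety is confirming that the normalization in \eqref{eq:arithvertices} (time points $j/(p-1)$, factor $p^{-1/2}$, sum over $1\leq x\leq j$) matches the one in \cite{KSpaths} up to the stated change of parameters. If it differed by an $o(1)$ uniform perturbation, I would absorb it by applying the argument to a slightly smaller ball $U_{\varepsilon'}$ and adjusting the constant.
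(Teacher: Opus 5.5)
Your proposal is correct and matches the paper's proof: the uniform ball around $f$ is open, so the portmanteau theorem with $K_p\Rightarrow K$ bounds the $\liminf$ below by $\Prob(\norm{K-f}<\varepsilon)$. Theorem~\ref{thm:smallball} with $g=f$ and $L=4$, whose hypotheses come from parts (i)--(iii) of Theorem~\ref{thm:curve}, then gives the bound with an absolute constant.
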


\begin{proof}
The ball $\{v:\norm{v-f}<\varepsilon\}$ is open. The Portmanteau
theorem and functional convergence give a lower bound for the left-hand
side by $\Prob(\norm{K-f}<\varepsilon)$.
Apply Theorem~\ref{thm:smallball} with $L=4$.
\end{proof}

Let $V_p(a,b)$ denote the set of vertices in \eqref{eq:arithvertices}.
We say that it is $\varepsilon$-dense in $\Delta$ if
\[
 \sup_{z\in\Delta}\inf_{v\in V_p(a,b)}|z-v|<\varepsilon.
\]

\begin{corollary}\label{cor:dense}
There is an absolute constant $C>0$ such that, for
$0<\varepsilon\leq1/4$, the limiting lower proportion of pairs $(a,b)$
for which $V_p(a,b)$ is $\varepsilon$-dense in $\Delta$ and
$|K_p(1;a,b)|<\varepsilon$ is at least
\[
 \exp\{-C\varepsilon^{-2}\log^2(2/\varepsilon)\}.
\]
\end{corollary}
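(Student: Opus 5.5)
The plan is to reduce Corollary~\ref{cor:dense} to Corollary~\ref{cor:arithmetic}, applied with a smaller radius. The key observation is deterministic. Suppose $\norm{K_p(\cdot;a,b)-f}<\varepsilon/2$ and $p$ is large enough. Then $|K_p(1;a,b)|=|K_p(1;a,b)-f(1)|<\varepsilon/2<\varepsilon$, since $f(1)=0$ by Theorem~\ref{thm:curve}(i). Moreover, the vertex set $V_p(a,b)$ is $\varepsilon$-dense in $\Delta$. It follows that the proportion in Corollary~\ref{cor:dense} is bounded below by the proportion in \eqref{eq:arithprob} with $\varepsilon/2$ in place of $\varepsilon$. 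That proportion is at least $\exp\{-C(\varepsilon/2)^{-2}\log^2(4/\varepsilon)\}$. Since $\log(4/\varepsilon)\leq2\log(2/\varepsilon)$ for $\varepsilon\leq1/4$, this is at least $\exp\{-16C\varepsilon^{-2}\log^2(2/\varepsilon)\}$, which has the required form with an absolute constant.

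For the density claim, fix $z\in\Delta$. By Theorem~\ref{thm:curve}(iv) there is $t\in[0,1]$ with $f(t)=z$. Choose the vertex time $j/(p-1)$ nearest to $t$, so $|t-j/(p-1)|\leq1/(2(p-1))$. Writing $v$ for the vertex $K_p(j/(p-1);a,b)$, we get
\[
 |z-v|\leq|f(t)-f(j/(p-1))|+|f(j/(p-1))-v|
 <4(2(p-1))^{-1/2}+\varepsilon/2,
\]
using the H\"older bound (ii) and the uniform approximation at the point $j/(p-1)$. The first term is at most $\varepsilon/4$ once $p\geq p_0(\varepsilon)$, and this threshold is independent of $z$. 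Hence $\sup_{z\in\Delta}\inf_{v}|z-v|\leq3\varepsilon/4<\varepsilon$. The strict inequality in the supremum is secured by the margin $\varepsilon/4$. Because a liminf only depends on large $p$, discarding $p<p_0(\varepsilon)$ is harmless.

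The only point needing care is this interplay: the density step must be uniform in $z$, and the strict-inequality definition requires a margin. Both are handled by the choice $\varepsilon/2$ together with the explicit H\"older constant. There is also no measurability issue, because for each $p$ the count is over a finite set. The set-inclusion bound holds for each large $p$ and therefore passes to the liminf, which is then estimated by Corollary~\ref{cor:arithmetic}.
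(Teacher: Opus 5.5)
Your proposal is correct and follows the paper's proof: apply Corollary~\ref{cor:arithmetic} with $\varepsilon/2$, use $f(1)=0$ for the endpoint condition, and get density by choosing $t$ with $f(t)=z$. The only difference is how you reach a vertex. You use the H\"older bound (ii) for $f$ at the nearest vertex time, while the paper moves along $K_p$ itself, using that each segment has length $p^{-1/2}$. Either way the threshold $p_0(\varepsilon)$ does not affect the liminf, and your constant bookkeeping ($\log(4/\varepsilon)\leq2\log(2/\varepsilon)$) is right.
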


\begin{proof}
Suppose that $\norm{K_p-f}<\varepsilon/2$.
For $z\in\Delta$, choose $t$ with $f(t)=z$.
Then $|K_p(t)-z|<\varepsilon/2$.
Each segment of $K_p$ has length $p^{-1/2}$, so a vertex lies within
$\varepsilon/2+p^{-1/2}$ of $z$.
This is less than $\varepsilon$ once $p>4\varepsilon^{-2}$.
Also $|K_p(1)|<\varepsilon/2$, since $f(1)=0$.
Apply Corollary~\ref{cor:arithmetic} with $\varepsilon/2$, changing the
absolute constant.
\end{proof}

In both corollaries $\varepsilon$ is fixed before $p$ tends to infinity.
The statements concern the two-parameter family. They provide no
effective threshold for $p$ as a function of $\varepsilon$.

\end{document}